\numberwithin{equation}{section}
\newtheorem{theorem}{Theorem}[section]
\newtheorem{lemma}[theorem]{Lemma}
\newcommand{\sq}{\sqrt[n]{a_n}}
\title{\textbf{Inequalities associated with the root sequences of P-recursive sequences}}
\author{Zhongjie Li}
\affil{School of Mathematics \break Tianjin University \break Tianjin 300072, China \break\texttt{lizhongjie@tju.edu.cn}}
\date{}
\begin{document}
\maketitle
\begin{abstract}
The Tur{\'a}n inequalities and the Laguerre inequalities are closely related to the Laguerre-P\'{o}lya class and the Riemann hypothesis. These inequalities have been extensively studied in the literature. In this paper, we propose a method to determine a positive integer $N$ such that the sequences $\{\sq/n!\}_{n \ge N}$ and $\{\sqrt[n+1]{a_{n+1}}/(\sq n!)\}_{n \ge N}$ satisfy the higher order Tur{\'a}n inequalities and the Laguerre inequalities of order two for a P-recursive sequence $\{a_n\}_{n \ge 1}$.

\end{abstract}

\noindent{\textbf{Keywords}: higher order Tur{\'a}n inequalities, Laguerre inequalities, P-recursive sequences. }
\section{Introduction}
A sequence $\{a_n\}_{n \ge 1}$ of real numbers is said to satisfy the \emph{Tur{\'a}n inequalities} or to be \emph{log-concave} if for all $n \ge 2$,
\begin{align}\label{11}
a_{n}^2 \ge a_{n-1}a_{n+1}.
\end{align}
The sequence $\{a_n\}_{n \ge 1}$ is said to satisfy the \emph{higher order Tur{\'a}n inequalities} or the \emph{cubic Newton inequalities} if for all $n \ge 2$,
\begin{align}\label{12}
4(a_{n}^2 - a_{n-1}a_{n+1})(a_{n+1}^2 - a_{n}a_{n+2}) - (a_{n}a_{n+1} - a_{n-1}a_{n+2})^2 \ge 0.
\end{align}
The Tur{\'a}n inequality and the higher order Tur{\'a}n inequality are related to the Laguerre-P\'{o}lya class of real entire functions. A real entire function
\begin{align}\label{13}
\psi(x) = \sum_{n=0}^{\infty} a_{n} \frac{x^n}{n!}
\end{align}
is said to be in the Laguerre-P\'{o}lya class, denoted $\psi(x) \in \mathcal{LP}$, if 
\begin{align*}
\psi(x) = cx^{m}e^{-\alpha x^2 + \beta x}\prod \limits_{k=1}^{\infty}(1 + x/x_{k})e^{-x/x_{k}},
\end{align*}
where $c,\ \beta,\ x_{k}$ are real numbers, $\alpha \ge 0$, $m$ is a nonnegative integer and $\sum x_{k}^{-2} < \infty$. P\'{o}lya and Schur \cite{schur1914zwei} proved that if a real entire function $\psi(x) \in \mathcal{LP}$, its Maclaurin coefficients satisfy the Tur{\'a}n inequality (\ref{11}). And Dimitrov \cite{dimitrov1998higher} established that the Maclaurin coefficients of a real function $\psi(x) \in \mathcal{LP}$ satisfy the higher order Tur{\'a}n inequality (\ref{12}).\\
\indent In recent years, many combinatorial sequences have been demonstrated to satisfy the inequality (\ref{12}). Chen, Jia and Wang \cite{chen2019higher} proved that the partition function $p(n)$ satisfies the higher order Tur{\'a}n inequality when $n \ge 95$, which was conjectured by Chen~\cite{chen2017spt}. Griffin, Ono, Rolen and Zagier \cite{griffin2019jensen} showed that the partition function $p(n)$ satisfies the order $d \ge 1$ Tur{\'a}n inequalities for sufficiently large $n$. Liu and Zhang \cite{liu2021inequalities} proved that the overpartition function $\bar{p}(n)$ satisfies the higher order Tur{\'a}n inequalities for $n \ge 16$. Wang \cite{wang2019higher} presented a unified approach to studying the higher order Tur{\'a}n inequalities for the sequence $\{a_n/n!\}_{n \ge 0}$ when $a_{n}$ satisfies a second-order linear recurrence. Moreover, Hou and Li \cite{hou2021log} proposed a method to determine a specific $N$ such that the higher order Tur{\'a}n inequality holds for a P-recursive sequence $\{a_n\}_{n \ge N}$.\\
\indent A sequence $\{a_n\}_{n \ge 1}$ satisfies the Laguerre inequality of order $m$ if for $n \ge 1$,
\begin{align}\label{14}
L_{m}(a_{n}) := \frac{1}{2} \sum_{k=0}^{2m} (-1)^{k+m} \binom{2m}{k} a_{n+k} a_{2m-k+n} \ge 0.
\end{align}
A polynomial $f(x)$ satisfies the Laguerre inequality if
\begin{align*}
f'(x)^2 - f(x)f''(x) \ge 0.
\end{align*}
Laguerre \cite{laguerre1989} stated that the Laguerre inequality holds for $f(x)$ if $f(x)$ is a polynomial with only real zeros. Jensen \cite{jensen1913recherches} introduced the $n$-th generalization of the Laguerre inequality as
\begin{align}\label{15}
L_{n}(f(x)) := \frac{1}{2} \sum_{k=0}^{2n} (-1)^{n+k} \binom{2n}{k} f^{(k)}(x) f^{(2n-k)}(x) \ge 0,
\end{align}
where $f^{(k)}(x)$ denotes the $k$th derivative of $f(x)$. By choosing the function $f(x)$ to have Taylor coefficients $a_{n+m}$ through the specialization $x = 0$, the above inequality (\ref{15}) transforms into the inequality (\ref{14}).\\
\indent Wang and Yang \cite{wang2022laguerre} established that the partition function $p(n)$, the overpartition function $\bar{p}(n)$ and several other combinatorial sequences satisfy the Laguerre inequalities of order two. Dou and Wang \cite{dou2023higher} found $N(m)$ for $3 \le m \le 10$, such that the Laguerre inequalities of order $m$ hold for the partition function $p(n)$ when $n \ge N(m)$. Wagner \cite{wagner2022on} showed that the partition function $p(n)$ satisfies the Laguerre inequality of any order when $n$ is sufficiently large. Furthermore, Li \cite{li2022ell} gave a method to find the explicit integer $N$ such that the Laguerre inequality of order two holds for a P-recursive sequence $\{a_n\}_{n \ge N}$.\\
\indent In this paper, we provide the sufficient conditions for the sequences related to the root sequences to satisfy the higher order Tur{\'a}n inequalities and the Laguerre inequalities of order two. We introduce the concept of a \emph{root sequence} denoted by $\{\sq\}_{n \ge 1}$, which corresponds to a nonnegative real suquence $\{a_n\}_{n \ge 1}$. Also, we study the sequence $\{a_n\}_{n \ge 1}$ is a \emph{P-recursive  sequence}. Recall that a sequence $\{a_n\}_{n \ge 1}$ is called a P-recursive sequence of order $d$ if it satisfies a recurrence relation of the form
\begin{align*}
p_{0}(n)a_{n} + p_{1}(n)a_{n+1} + \dots + p_{d}(n)a_{n+d} = 0,
\end{align*}
where $p_{i}(n)$ are polynomials in $n$.\\
\indent We can solve the given problem by establishing upper and lower bounds on $u_n = \sqrt[n-1]{a_{n-1}}\sqrt[n+1]{a_{n+1}}/\sq^{2}$. To achieve this, we use the asymptotic expression of $u_n$. Hou and Li \cite{hou2023log} provided the asymptotic expansion of $\log {u_n}$, then we can derive the asymptotic expansion of $u_n$:
\begin{align*}
& 1 - \frac{\mu_{0}}{n^2} + \sum_{j=1}^{\rho-1}\frac{\mu_{j}(j/\rho-1)(j/\rho-2)}{n^{3-j/\rho}} + \frac{r(2 \log{n}-3)}{n^3} \\
& +\sum\limits_{s=0}^M\frac{\tilde{b}_{s}(1+s/\rho)(2+s/\rho)}{n^{3+s/\rho}} + \dots + o\left(\frac{1}{n^{1+M/\rho}}\right),
\end{align*}
where $\tilde{b}_0 =\log b_0$ and $\tilde{b}_i$ is a polynomial in $b_1/b_0, \ldots, b_i/b_0$.\\
\indent Based on the asymptotic expression of $a_n$, we can establish both upper and lower bounds for $a_n$ as well as for the ratio $a_{n+1}/a_{n}$. Similarly, according to the asymptotic form of $u_n$, we can derive upper and lower bounds for $u_n$. Consequently, we introduce a method to demonstrate that the sequence $\{\sq/n!\}_{n \ge N}$ and $\{\sqrt[n+1]{a_{n+1}}/(\sq n!)\}_{n \ge N}$ satisfies the higher order Tur{\'a}n inequalities and the Laguerre inequalities of order two. Finally, we provide examples to illustrate the proof method.\\
\indent This paper is organized as follows. In Section \ref{s2}, we begin by utilizing the bounds of $a_n$ and $a_{n+1}/a_{n}$ provided by \cite{hou2023log} to establish the criteria for calculating the bounds of $u_n$. Subsequently, we give a method to compute $N$, such that the sequences $\{\sq/n!\}_{n \ge N}$ and $\{\sqrt[n+1]{a_{n+1}}/(\sq n!)\}_{n \ge N}$ satisfy the higher order Tur{\'a}n inequalities. In Section \ref{s3}, we use the upper and lower bounds derived in Section \ref{s2} to establish sufficient conditions for proving that the sequences $\{\sq/n!\}_{n \ge N}$ and $\{\sqrt[n+1]{a_{n+1}}/(\sq n!)\}_{n \ge N}$ satisfy the Laguerre inequalities of order two.

\section{The higher order Tur{\'a}n inequality}\label{s2}

\,\,\,\,\,\,\,  In this section, we will first present a systematic approach to prove that the sequence $\left\{{\sq}/{n!}\right\}_{n \ge 1}$ satisfies the higher order Tur{\'a}n inequality when $\{a_n\}_{n \ge 1}$ is a P-recursive sequence.\\
\indent Hou and Li \cite{hou2021log} provided a method to prove that the higher order Tur{\'a}n inequalities hold for the P-recursive sequence $\{a_n\}_{n \ge 1}$.

\begin{lemma}\rm{\bf{(\cite[Theorem 5.2]{hou2021log})}}\label{l1}
Let
\begin{align}\label{21}
t(x,y)=4(1-x)(1-y)-(1-xy)^2.
\end{align}
If there exists an integer $N$ and two rational functions $f_n$ and $g_n$ about $n$, such that for all $n \ge N$,
\[f_n < a_{n-1}a_{n+1}/a_{n}^2 < g_n\]
and
\[t(f_{n},f_{n+1}) > 0,\ t(f_{n},g_{n+1}) > 0,\ t(g_{n},f_{n+1}) > 0,\ t(g_{n},g_{n+1}) > 0.\]
Then $\{a_n\}_{n \ge N}$ satisfies the higher order Tur{\'a}n inequality.
\end{lemma}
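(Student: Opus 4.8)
The plan is to reduce the statement about the higher order Turán inequality for $\{a_n\}_{n \ge N}$ to the positivity of the bivariate polynomial $t(x,y)$ on a rectangle. First I would recall the standard fact that the cubic Newton inequality (\ref{12}) for three consecutive ratios can be rewritten, after dividing by $a_n^2 a_{n+1}^2$ (which is positive since $\{a_n\}$ is eventually positive), in terms of the quantities $x = a_{n-1}a_{n+1}/a_n^2$ and $y = a_n a_{n+2}/a_{n+1}^2$. A direct computation shows that
\[
\frac{4(a_n^2-a_{n-1}a_{n+1})(a_{n+1}^2-a_n a_{n+2}) - (a_n a_{n+1}-a_{n-1}a_{n+2})^2}{a_n^2 a_{n+1}^2} = t(x,y),
\]
where $t(x,y) = 4(1-x)(1-y) - (1-xy)^2$ as defined in (\ref{21}); here one uses that $a_{n-1}a_{n+2}/(a_n a_{n+1}) = xy$. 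Hence (\ref{12}) holds at index $n$ if and only if $t(x_n, y_n) \ge 0$, where $x_n = a_{n-1}a_{n+1}/a_n^2$ and $y_n = x_{n+1}$.

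Next I would exploit the hypothesis $f_n < x_n < g_n$ for all $n \ge N$, so that the point $(x_n, y_n) = (x_n, x_{n+1})$ lies in the open rectangle $(f_n, g_n) \times (f_{n+1}, g_{n+1})$. The key structural observation is that $t(x,y)$ is, up to sign, controlled on a rectangle by its values at the four corners: expanding, $t(x,y) = -x^2 y^2 + 2xy - x^2 - y^2 - 4x - 4y + \dots$ — more usefully, for fixed $y$ the map $x \mapsto t(x,y)$ is a concave quadratic (its leading coefficient in $x$ is $-(1-y)^2 \le 0$, and symmetrically in $y$), so $t$ restricted to any axis-parallel segment is concave, and therefore $t$ on the closed rectangle attains its minimum at one of the four vertices. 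Thus if $t$ is strictly positive at all four corners $(f_n,f_{n+1})$, $(f_n,g_{n+1})$, $(g_n,f_{n+1})$, $(g_n,g_{n+1})$, it is positive on the whole rectangle, hence $t(x_n,x_{n+1}) > 0$, giving (\ref{12}) for every $n \ge N$.

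The main obstacle is making the corner-reduction rigorous: I must verify that the minimum of the biconcave function $t$ over a rectangle really is attained at a vertex, rather than in the interior or on the relative interior of an edge. This follows because a function that is concave along every horizontal and every vertical line has no interior local minimum and no edge-interior local minimum (restricting to the edge, a concave function on an interval takes its minimum at an endpoint), so the minimum over the compact rectangle is forced to a corner; one should state this as a small lemma or cite the analogous argument in \cite{hou2021log}. The remaining points are routine: the polynomial identity relating (\ref{12}) to $t(x_n,x_{n+1})$ is a finite algebraic verification, and the passage from $t > 0$ at the corners to $t > 0$ on the rectangle is immediate once the concavity reduction is in place. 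Since the hypothesis gives strict inequalities at all four corners for every $n \ge N$ simultaneously, we conclude that $\{a_n\}_{n \ge N}$ satisfies the higher order Turán inequality.
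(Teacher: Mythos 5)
Your argument is correct and is essentially the intended one: the paper states this lemma without proof (quoting Theorem 5.2 of Hou and Li), and the reduction of \eqref{12} to $t(x_n,x_{n+1})\ge 0$ with $x_n=a_{n-1}a_{n+1}/a_n^2$, followed by the observation that $t$ is concave along every axis-parallel line so its minimum over the rectangle $[f_n,g_n]\times[f_{n+1},g_{n+1}]$ occurs at a corner, is exactly the route taken in the cited source. One small slip: the coefficient of $x^2$ in $t(x,y)$ is $-y^2$, not $-(1-y)^2$, but since $-y^2\le 0$ the separate concavity (and hence the corner-minimum reduction) still holds and the rest of your argument goes through.
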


We will use the lemma above to prove the sequence $\{\sq/n!\}_{n \ge 1}$ satisfies the higher order Tur{\'a}n inequalities. To accomplish this, we need to determine the lower and upper bounds of $u_n = \sqrt[n-1]{a_{n-1}}\sqrt[n+1]{a_{n+1}}/\sq^2$. Additionally, we will provide a criterion for establishing these bounds.

\begin{theorem}\label{t2}
Let $\{a_n\}_{n \ge 1}$ be a positive sequence. Suppose we can find a lower bound $s_n$ and an upper bound $S_n$ of $a_n$, a lower bound $f_n$ and an upper bound $g_n$ of $r_n = \frac{a_{n+1}}{a_n}$, and two rational functions $\widetilde{f_n}$ and $\widetilde{g_n}$, such that for $n \ge N$,
\begin{align*}
(n^2 - n)\log{f_{n}} - (n^2 + n - 2)\log{g_{n-1}} + 2\log{s_{n-1}} > (n - 1)n(n + 1)\log{\widetilde{f_{n}}},
\end{align*}
and
\begin{align*}
(n^2 - n)\log{g_{n}} - (n^2 + n - 2)\log{f_{n-1}} + 2\log{S_{n-1}} < (n - 1)n(n + 1)\log{\widetilde{g_{n}}}.
\end{align*}
Then we have $\widetilde{f_n} < u_n < \widetilde{g_n}$ for $n \ge N$.
\end{theorem}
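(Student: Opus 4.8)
The plan is to take logarithms of $u_n$ and reduce everything to the two one-step ratios $r_{n-1}$ and $r_n$ together with the single value $a_{n-1}$, so that the four given one-sided estimates can be substituted directly.

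First I would write
\begin{align*}
\log u_n = \frac{1}{n-1}\log a_{n-1} + \frac{1}{n+1}\log a_{n+1} - \frac{2}{n}\log a_n
\end{align*}
and substitute $a_n = a_{n-1} r_{n-1}$ and $a_{n+1} = a_{n-1} r_{n-1} r_n$. Collecting terms and clearing denominators, using $\frac{1}{n-1}+\frac{1}{n+1}-\frac{2}{n}=\frac{2}{(n-1)n(n+1)}$ and $\frac{1}{n+1}-\frac{2}{n}=\frac{-(n+2)}{n(n+1)}$, yields the key identity
\begin{align*}
(n-1)n(n+1)\log u_n = 2\log a_{n-1} - (n^2+n-2)\log r_{n-1} + (n^2-n)\log r_n .
\end{align*}

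Next, for $n\ge N$ (with $N\ge 2$, so that $(n-1)n(n+1)>0$, $n^2+n-2=(n-1)(n+2)>0$ and $n^2-n=n(n-1)>0$), I track the sign of each coefficient on the right: $+2$ multiplies $\log a_{n-1}$, $-(n^2+n-2)<0$ multiplies $\log r_{n-1}$, and $+(n^2-n)>0$ multiplies $\log r_n$. Since all the bounds $s_n,S_n,f_n,g_n$ are positive (being bounds for positive quantities), $\log$ is defined and monotone, so to obtain a lower bound I substitute $a_{n-1}>s_{n-1}$, $r_{n-1}<g_{n-1}$, $r_n>f_n$, which gives
\begin{align*}
(n-1)n(n+1)\log u_n > 2\log s_{n-1} - (n^2+n-2)\log g_{n-1} + (n^2-n)\log f_n .
\end{align*}
The first hypothesis states that the right-hand side exceeds $(n-1)n(n+1)\log\widetilde{f_n}$; dividing by $(n-1)n(n+1)>0$ and exponentiating gives $u_n>\widetilde{f_n}$. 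The upper bound is entirely symmetric: substituting $a_{n-1}<S_{n-1}$, $r_{n-1}>f_{n-1}$, $r_n<g_n$ and invoking the second hypothesis yields $u_n<\widetilde{g_n}$.

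The argument is short, and the only real obstacle is bookkeeping: deriving the partial-fraction coefficients $2$, $-(n^2+n-2)$, $n^2-n$ correctly, and then making sure that the sign of each coefficient dictates whether one inserts the upper or the lower bound of $a_{n-1}$, $r_{n-1}$, $r_n$. It is also worth flagging explicitly the positivity of the bounds and the condition $N\ge 2$, since both are needed for the logarithms and for the direction of the final division.
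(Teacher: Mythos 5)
Your proposal is correct and follows essentially the same route as the paper: both multiply $\log u_n$ by $(n-1)n(n+1)$, substitute $a_n = a_{n-1}r_{n-1}$ and $a_{n+1} = a_n r_n$ to reach the identity $(n-1)n(n+1)\log u_n = 2\log a_{n-1} - (n^2+n-2)\log r_{n-1} + (n^2-n)\log r_n$, and then insert the one-sided bounds according to the signs of the coefficients. Your write-up is in fact slightly more careful than the paper's, which leaves the final sign-tracking substitution implicit ("the theorem is proven immediately"); your explicit remarks on positivity of the bounds and on $N\ge 2$ are sensible but do not change the argument.
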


\begin{proof}
In order to prove $\widetilde{f_n} < u_n < \widetilde{g_n}$, it suffices to prove 
\[\widetilde{f_n}^{(n - 1)n(n + 1)} < \frac{a_{n-1}^{n(n + 1)}a_{n+1}^{n(n - 1)}}{a_{n}^{2(n - 1)(n + 1)}} < \widetilde{g_n}^{(n - 1)n(n + 1)}.\]
Equivalently, we need to verify the following inequalities
\[(n - 1)n(n + 1)\log{\widetilde{f_n}} < n(n + 1)\log{a_{n-1}} + n(n - 1)\log{a_{n+1}} - 2(n - 1)(n + 1)\log{a_{n}}\]
and
\[n(n + 1)\log{a_{n-1}} + n(n - 1)\log{a_{n+1}} - 2(n - 1)(n + 1)\log{a_{n}} < (n - 1)n(n + 1)\log{\widetilde{g_n}}.\]
By substituting $a_{n} = a_{n-1}r_{n-1}$ and $a_{n+1} = a_{n}r_{n}$ into the above two inequalities, we obtain
\[(n - 1)n(n + 1)\log{\widetilde{f_n}} < (n^2 - n)\log{r_{n}} - (n^2 + n - 2)\log{r_{n-1}} + 2\log{a_{n-1}}\]
and
\[(n^2 - n)\log{r_{n}} - (n^2 + n - 2)\log{r_{n-1}} + 2\log{a_{n-1}} < (n - 1)n(n + 1)\log{\widetilde{g_n}}.\]
Consequently, the theorem is proven immediately.
\end{proof}

Using the above lower and upper bounds and Lemma \ref{l1}, we can establish the criteria that enable the sequence $\{\sq/n!\}_{n \ge 1}$ to satisfy the higher order Tur{\'a}n inequality.

\begin{theorem}\label{t3}
Let $\{a_n\}_{n \ge 1}$ be a positive sequence,
\[p_{n} = \frac{n}{n+1} \widetilde{f_n},\ q_{n} = \frac{n}{n+1} \widetilde{g_n}.\]
If there exists an integer $N$ such that for $n \ge N$,
\[t(p_{n},p_{n+1}) > 0,\ t(p_{n},q_{n+1}) > 0,\ t(q_{n},p_{n+1}) > 0,\ t(q_{n},q_{n+1}) > 0.\]
Then the sequence $\{\sq/n!\}_{n \ge N}$ satisfies the higher order Tur{\'a}n inequality.
\end{theorem}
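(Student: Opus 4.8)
The plan is to deduce Theorem \ref{t3} directly from Lemma \ref{l1} and Theorem \ref{t2} by pinning down the ratio that appears in the higher order Tur{\'a}n inequality for $b_n:=\sq/n!$. The only computation needed is to express $b_{n-1}b_{n+1}/b_n^2$ in terms of $u_n$, and then to match the resulting bounds with the hypotheses already imposed on $p_n$ and $q_n$.

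First I would compute, for $n\ge 2$,
\[
\frac{b_{n-1}b_{n+1}}{b_n^2}
=\frac{\sqrt[n-1]{a_{n-1}}\,\sqrt[n+1]{a_{n+1}}}{\sq^{\,2}}\cdot\frac{(n!)^2}{(n-1)!\,(n+1)!}
=\frac{n}{n+1}\,u_n ,
\]
using $(n!)^2/\big((n-1)!\,(n+1)!\big)=n/(n+1)$ and the definition $u_n=\sqrt[n-1]{a_{n-1}}\sqrt[n+1]{a_{n+1}}/\sq^{\,2}$. Next, since the rational functions $\widetilde{f_n},\widetilde{g_n}$ entering $p_n,q_n$ are exactly those produced by Theorem \ref{t2}, that theorem gives $\widetilde{f_n}<u_n<\widetilde{g_n}$ for $n\ge N$, whence
\[
p_n=\frac{n}{n+1}\,\widetilde{f_n}<\frac{b_{n-1}b_{n+1}}{b_n^2}<\frac{n}{n+1}\,\widetilde{g_n}=q_n,\qquad n\ge N .
\]

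Finally I would invoke Lemma \ref{l1} for the sequence $\{b_n\}_{n\ge N}=\{\sq/n!\}_{n\ge N}$, taking the roles of $f_n,g_n$ in that lemma to be $p_n,q_n$: the two-sided bound just established is the required estimate on $b_{n-1}b_{n+1}/b_n^2$, and the four sign conditions $t(p_n,p_{n+1})>0$, $t(p_n,q_{n+1})>0$, $t(q_n,p_{n+1})>0$, $t(q_n,q_{n+1})>0$ are precisely the hypotheses of Theorem \ref{t3}. Lemma \ref{l1} then delivers the higher order Tur{\'a}n inequality for $\{\sq/n!\}_{n\ge N}$. There is no genuine obstacle specific to this statement: the substantive work — obtaining $s_n,S_n,f_n,g_n$ and then $\widetilde{f_n},\widetilde{g_n}$ from the asymptotic expansion of $u_n$, and the fact that controlling $t$ at the four corners of the rectangle $[p_n,q_n]\times[p_{n+1},q_{n+1}]$ controls it throughout (via the concavity of $x\mapsto t(x,y)$ and $y\mapsto t(x,y)$) — is already carried out in Theorem \ref{t2} and in Lemma \ref{l1}. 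The only points requiring care are the factorial bookkeeping in the displayed identity and the tacit assumption that the $\widetilde{f_n},\widetilde{g_n}$ in $p_n,q_n$ are valid bounds for $u_n$, i.e.\ that the hypotheses of Theorem \ref{t2} are in force.
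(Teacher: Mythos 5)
Your proposal is correct and follows exactly the route the paper intends: the paper omits this proof as an immediate consequence of Lemma \ref{l1}, and your computation $b_{n-1}b_{n+1}/b_n^2=\tfrac{n}{n+1}u_n$ together with the bounds $\widetilde{f_n}<u_n<\widetilde{g_n}$ from Theorem \ref{t2} supplies precisely the missing details. No discrepancy with the paper's (omitted) argument.
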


Similarly, we can provide the sufficient conditions for the sequence \\
$\left\{\sqrt[n+1]{a_{n+1}}/(\sq n!)\right\}_{n \ge 1}$ to satisfy the higher order Tur{\'a}n inequality.

\begin{theorem}\label{t4}
Let $\{a_n\}_{n \ge 1}$ be a positive sequence,
\[\widetilde{p_{n}} = \frac{n}{n+1} \frac{\widetilde{f_{n+1}}}{\widetilde{g_{n}}},\ \widetilde{q_{n}} = \frac{n}{n+1} \frac{\widetilde{g_{n+1}}}{\widetilde{f_{n}}}.\]
If there exists an integer $N$ such that for $n \ge N$,
\[t(\widetilde{p_{n}},\widetilde{p_{n+1}}) > 0,\ t(\widetilde{p_{n}},\widetilde{q_{n+1}}) > 0,\ t(\widetilde{q_{n}},\widetilde{p_{n+1}}) > 0,\ t(\widetilde{q_{n}},\widetilde{q_{n+1}}) > 0.\]
Then the sequence $\left\{\sqrt[n+1]{a_{n+1}}/(\sq n!)\right\}_{n \ge N}$ satisfies the higher order Tur{\'a}n inequality.
\end{theorem}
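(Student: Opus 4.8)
The plan is to mimic the structure of the proof of Theorem \ref{t3}, replacing the role of the root sequence $\{\sq/n!\}$ by the quotient sequence $\{\sqrt[n+1]{a_{n+1}}/(\sq\,n!)\}$. Write $c_n = \sqrt[n+1]{a_{n+1}}/(\sq\,n!)$. First I would compute the ratio $c_{n-1}c_{n+1}/c_n^2$ in terms of the root sequence: the factorial parts contribute $(n!)^2/((n-1)!\,(n+1)!) = n/(n+1)$, so
\[
\frac{c_{n-1}c_{n+1}}{c_n^2} = \frac{n}{n+1}\cdot\frac{\sqrt[n]{a_n}\cdot\sqrt[n+2]{a_{n+2}}}{\big(\sqrt[n+1]{a_{n+1}}\big)^2}.
\]
The second factor is exactly $u_{n+1}$ (the quantity $u_m = \sqrt[m-1]{a_{m-1}}\sqrt[m+1]{a_{m+1}}/\sqrt[m]{a_m}^2$ evaluated at $m=n+1$), so $c_{n-1}c_{n+1}/c_n^2 = \frac{n}{n+1}u_{n+1}$.

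Next I would invoke Theorem \ref{t2}, which under the stated hypotheses gives $\widetilde{f_m} < u_m < \widetilde{g_m}$ for $m \ge N$; applied at $m = n+1$ this yields $\widetilde{f_{n+1}} < u_{n+1} < \widetilde{g_{n+1}}$, hence
\[
\frac{n}{n+1}\widetilde{f_{n+1}} < \frac{c_{n-1}c_{n+1}}{c_n^2} < \frac{n}{n+1}\widetilde{g_{n+1}}.
\]
Wait—here is the subtlety to flag: the definitions $\widetilde{p_n} = \frac{n}{n+1}\,\widetilde{f_{n+1}}/\widetilde{g_n}$ and $\widetilde{q_n} = \frac{n}{n+1}\,\widetilde{g_{n+1}}/\widetilde{f_n}$ carry extra factors $1/\widetilde{g_n}$ and $1/\widetilde{f_n}$, which do not appear in the bound just derived. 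The resolution is that the statement is not about $c_n$ directly but should be read through the same normalization used for $u_n$: one applies Lemma \ref{l1} to $\{c_n\}$, which requires two-sided bounds on $c_{n-1}c_{n+1}/c_n^2$. Substituting $u_{n+1} = a_n^{1/n}a_{n+2}^{1/(n+2)}/a_{n+1}^{2/(n+1)}$ and then bounding each of the three $m$-th roots separately by $\widetilde{f},\widetilde{g}$ at the appropriate indices (using $\widetilde{f_m} < \sqrt[m-1]{a_{m-1}}\sqrt[m+1]{a_{m+1}}/\sqrt[m]{a_m}^2 < \widetilde{g_m}$ telescoped across $m = n, n+1$) produces precisely the cross-ratio $\widetilde{f_{n+1}}/\widetilde{g_n}$ in the lower bound and $\widetilde{g_{n+1}}/\widetilde{f_n}$ in the upper bound, after the $n/(n+1)$ factorial factor is attached. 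So $\widetilde{p_n} < c_{n-1}c_{n+1}/c_n^2 < \widetilde{q_n}$ for $n \ge N$.

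Finally, with $f_n := \widetilde{p_n}$ and $g_n := \widetilde{q_n}$ in the notation of Lemma \ref{l1}, the hypothesis of the present theorem is exactly $t(\widetilde{p_n},\widetilde{p_{n+1}}) > 0$, $t(\widetilde{p_n},\widetilde{q_{n+1}}) > 0$, $t(\widetilde{q_n},\widetilde{p_{n+1}}) > 0$, $t(\widetilde{q_n},\widetilde{q_{n+1}}) > 0$ for $n \ge N$, so Lemma \ref{l1} immediately gives that $\{c_n\}_{n \ge N} = \{\sqrt[n+1]{a_{n+1}}/(\sq\,n!)\}_{n\ge N}$ satisfies the higher order Tur\'an inequality. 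I expect the main obstacle to be bookkeeping: getting the exponents in the telescoped product of $m$-th roots to align correctly so that the cross-ratio $\widetilde{f_{n+1}}/\widetilde{g_n}$ (rather than a product) emerges, and checking that the index shifts keep everything within the range $n \ge N$ where Theorem \ref{t2} applies (one may need $n \ge N$ or $n+1 \ge N$ depending on conventions, which only changes the threshold by one). Everything past that point is a direct citation of Lemma \ref{l1}.
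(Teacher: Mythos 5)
Your overall strategy is the right one (and is the one the paper has in mind, though it omits the proof): compute $c_{n-1}c_{n+1}/c_n^2$ for $c_n = \sqrt[n+1]{a_{n+1}}/(\sq\, n!)$, show it is squeezed between $\widetilde{p_n}$ and $\widetilde{q_n}$, and cite Lemma \ref{l1}. But your ratio computation is wrong, and the ``resolution'' paragraph does not actually repair it. Writing $b_m = \sqrt[m]{a_m}$, you have $c_n = b_{n+1}/(b_n\, n!)$, and you effectively dropped the $b_n$ from the denominator: that is why you obtained $\tfrac{n}{n+1}u_{n+1}$ and then had to wonder where the extra factors $1/\widetilde{g_n}$ and $1/\widetilde{f_n}$ come from. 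The correct computation is
\begin{align*}
\frac{c_{n-1}c_{n+1}}{c_n^2}
= \frac{n}{n+1}\cdot\frac{b_n^3\, b_{n+2}}{b_{n-1}\, b_{n+1}^3}
= \frac{n}{n+1}\cdot\frac{b_n b_{n+2}/b_{n+1}^2}{b_{n-1}b_{n+1}/b_n^2}
= \frac{n}{n+1}\cdot\frac{u_{n+1}}{u_n},
\end{align*}
and now the two-sided bound $\widetilde{f_m} < u_m < \widetilde{g_m}$ applied at $m=n$ and $m=n+1$ immediately gives
$\widetilde{p_n} = \tfrac{n}{n+1}\widetilde{f_{n+1}}/\widetilde{g_n} < c_{n-1}c_{n+1}/c_n^2 < \tfrac{n}{n+1}\widetilde{g_{n+1}}/\widetilde{f_n} = \widetilde{q_n}$. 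No ``telescoping of the three $m$-th roots'' is involved — indeed that step as you describe it cannot work, since $\widetilde{f}$ and $\widetilde{g}$ bound the ratios $u_m$, not the individual roots $a_m^{1/m}$, so there is nothing to bound ``each root separately'' against.

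Once the ratio is computed correctly, the rest of your argument goes through verbatim: take $f_n := \widetilde{p_n}$ and $g_n := \widetilde{q_n}$ in Lemma \ref{l1}, and the four positivity hypotheses on $t$ are exactly what the lemma requires. So the gap is a single algebraic slip, but it is a real one: as written, your derivation of the key inequality $\widetilde{p_n} < c_{n-1}c_{n+1}/c_n^2 < \widetilde{q_n}$ is not valid, and you reached the correct endpoint only by asserting it.
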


Based on the Lemma \ref{l1}, the proof of Theorems \ref{t3} and \ref{t4} is obvious. So we omit the proof process. Next, we will provide an example to illustrate the application of the above theorems.

\begin{theorem}\rm{\bf{(\cite[Conjecture 4.7]{zhao2023inequalities})}}
Let
\[B_n = \sum_{k = 1}^{n} \frac{2}{n(n + 1)^{2}} \binom{n+1}{k-1} \binom{n+1}{k} \binom{n+1}{k+1}\]
be the Baxter number. Then the sequences $\{\sqrt[n]{B_n}/n!\}_{n \ge 2}$ and\\
$\left\{\sqrt[n+1]{B_{n+1}}/(\sqrt[n]{B_n} n!)\right\}_{n \ge 2}$ satisfy the higher order Tur{\'a}n inequalities.
\end{theorem}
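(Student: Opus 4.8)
The plan is to derive both assertions from Theorems \ref{t3} and \ref{t4}, which reduces the problem to producing, for the Baxter sequence, two rational functions $\widetilde{f_n}$ and $\widetilde{g_n}$ with $\widetilde{f_n} < u_n < \widetilde{g_n}$ for all sufficiently large $n$, where $u_n = \sqrt[n-1]{B_{n-1}}\sqrt[n+1]{B_{n+1}}/\sqrt[n]{B_n}^2$. By Theorem \ref{t2} such bounds become available once one has a lower bound $s_n$ and an upper bound $S_n$ of $B_n$, together with a lower bound $f_n$ and an upper bound $g_n$ of $r_n = B_{n+1}/B_n$; producing sharp bounds of this kind for a P-recursive sequence is exactly what the method of Hou and Li \cite{hou2023log} provides. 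A direct check of (\ref{12}) then disposes of the finitely many small values of $n$.

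In detail, I would proceed as follows. First, by creative telescoping (the recurrence is also recorded in \cite{zhao2023inequalities}) the Baxter numbers satisfy an explicit linear recurrence $\sum_i p_i(n) B_{n+i} = 0$ with polynomial coefficients, so $\{B_n\}$ is a positive P-recursive sequence. Applying the method of \cite{hou2023log} produces explicit bounds $s_n \le B_n \le S_n$ of the shape $c\cdot 8^n n^{-4}\bigl(1 + O(1/n)\bigr)$, matching the known growth of $B_n$, and explicit rational functions $f_n < r_n < g_n$ tending to $8$; it also yields the asymptotic expansion of $u_n$, which here (the case $\rho = 1$ with no super-exponential part) satisfies $u_n = 1 + O(\log n / n^3)$, so in particular $u_n < 1$ for $n$ large. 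Second, I would choose rational functions $\widetilde{f_n} < \widetilde{g_n}$ reflecting this behaviour --- a natural first attempt being $\widetilde{f_n} = 1 - 1/n^2$ and $\widetilde{g_n} = 1$, to be corrected by lower-order rational terms if necessary --- and verify the two inequalities of Theorem \ref{t2}: after cancelling the $\log 8$ contributions (their polynomial coefficients sum to zero) and enclosing each remaining logarithm of a rational function tending to $1$ between rational functions via $1 - 1/x \le \log x \le x - 1$, both inequalities become polynomial inequalities in $n$, certifiable for all $n \ge N$ by inspecting the leading coefficient and checking a finite range. This gives $\widetilde{f_n} < u_n < \widetilde{g_n}$ for $n \ge N$. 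Third, forming $p_n = \frac{n}{n+1}\widetilde{f_n}$, $q_n = \frac{n}{n+1}\widetilde{g_n}$ and $\widetilde{p_n} = \frac{n}{n+1}\widetilde{f_{n+1}}/\widetilde{g_n}$, $\widetilde{q_n} = \frac{n}{n+1}\widetilde{g_{n+1}}/\widetilde{f_n}$ (with the trial choice above, $p_n = \frac{n-1}{n}$ and $q_n = \frac{n}{n+1}$), I would verify the four inequalities $t(p_n,p_{n+1})>0,\dots,t(q_n,q_{n+1})>0$ of Theorem \ref{t3} and the four analogous ones of Theorem \ref{t4}; each $t(\cdot,\cdot)$ is an explicit rational function of $n$, positive for $n \ge N'$. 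By Theorems \ref{t3} and \ref{t4} the two root sequences then satisfy the higher order Tur\'an inequalities for $n \ge N'$, and computing $B_2,\dots,B_{N'+2}$ and checking (\ref{12}) directly for the remaining $n$ completes the proof.

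The main obstacle is the last family of inequalities, the positivity of the $t$-values. Since $p_n, q_n, \widetilde{p_n}, \widetilde{q_n}$ all tend to $1$ and $t(x,y) = 4(1-x)(1-y) - (1-xy)^2$ vanishes to second order at $(1,1)$ --- writing $x = 1-\epsilon$, $y = 1-\delta$ one has the exact identity $t(x,y) = -(\epsilon-\delta)^2 + 2\epsilon\delta(\epsilon+\delta) - \epsilon^2\delta^2$ --- the four quantities $t(p_n,p_{n+1}),\dots,t(q_n,q_{n+1})$ are only of size $\sim 4/n^3$: positive, but just barely, the positivity coming entirely from the cubic term $2\epsilon\delta(\epsilon+\delta)$. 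The most delicate are the mixed terms $t(q_n,p_{n+1})$ and $t(p_n,q_{n+1})$, where the difference $\epsilon - \delta$ absorbs the full width $\widetilde{g_n} - \widetilde{f_n}$ of the bounds: were that width only $O(1/n)$, the term $(\epsilon-\delta)^2 \sim 1/n^2$ would overwhelm the $4/n^3$ main term and some $t$-value would turn negative. One therefore needs bounds $\widetilde{f_n}$, $\widetilde{g_n}$ accurate to order $n^{-2}$, and this is exactly the point at which the sharpness of the estimates coming from \cite{hou2023log} via Theorem \ref{t2} is indispensable. Once bounds of that quality are in hand, determining the explicit thresholds $N$ and $N'$ and verifying (\ref{12}) for the finitely many remaining $n$ are routine.
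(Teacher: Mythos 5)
Your proposal is correct and follows essentially the same route as the paper: Zeilberger's recurrence and the asymptotics of \cite{hou2023log} give bounds $s_n, S_n, f_n, g_n$, Theorem \ref{t2} converts these into rational bounds $\widetilde{f_n} = 1 - 1/n^2 < u_n < \widetilde{g_n}$ (the paper takes $\widetilde{g_n} = 1 - 8/n^3$ to account for the $-8\log n/n^3$ term in the expansion of $u_n$), and Theorems \ref{t3} and \ref{t4} plus a finite check finish the argument. Your analysis of why the bounds must be accurate to order $n^{-2}$ --- so that $(\epsilon-\delta)^2$ does not swamp the $4/n^3$ main term of $t$ --- correctly identifies the only delicate point.
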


\begin{proof}
By Zeilberger's algorithm, we get that $B_n$ satisfies the following recurrence relation,
\[(n+3)(n+4)B_{n+1} = (7n^2+21n+12)B_n + 8n(n-1)B_{n-1}\]
with initial values
\[B_{0} = 1,\ B_{1} = 1,\ B_{2} = 2,\ B_{3} = 6.\]
By utilizing algorithm {\tt Asy} from the {\tt Mathematica} package {\tt P-rec.m}, we can obtain the asymptotic expansion of $B_n$,
\[C\cdot{8^{n}{n^{-4}}\left(1 - \frac{22}{3n} + \frac{955}{27n^2} + o\left(\frac{1}{n^2}\right)\right)},\]
where $C$ is a constant.\\
\indent Then employing the algorithm {\tt RootLog}, we derive a lower bound $f_n$ and an upper bound $g_n$ for $r_n = {B_{n+1}}/{B_{n}}$ when $n \ge 753$,
\[f_n = 8 - \frac{32}{n} + \frac{413}{3n^2},\ g_n = 8 - \frac{32}{n} + \frac{419}{3n^2}.\]
Next, we will demonstrate by mathematical induction that $s_n = 8^{n}n^{-5}$ is a lower bound and $S_n = 8^{n}n^{-3}$ is an upper bound for $B_n$ when $n \ge 3$. Assuming that $s_n < B_n < S_n$, then we will prove $s_{n+1} < B_{n+1} < S_{n+1}$. On the one hand, we have 
\[B_{n+1} = r_{n}B_{n} > f_{n}s_{n}\]
and
\[\frac{f_{n}s_{n} - s_{n+1}}{s_n} = \frac{413+1969n+3674n^2+3290n^3+1345n^4+173n^5+24n^6}{3n^2(1+n)^5},\]
which is positive for $n \ge 1$. On the other hand, we have
\[B_{n+1} = r_{n}B_{n} < g_{n}S_{n}\]
and
\[\frac{S_{n+1}- g_{n}S_{n}}{S_n} = \frac{-419-1161n-993n^2-203n^3+24n^4}{3n^2(1+n)^3},\]
which is positive for $n \ge 13$. Therefore, we can conclude that $s_n < B_n < S_n$ for $n \ge {753}$. Checking the first 752 items, we finally get that 
\[s_n < B_n < S_n, \quad \forall\, n\ge{3}.\]
By utilizing the asymptotic expansion of $B_n$, we obtain the asymptotic expression of $u_n = {\sqrt[n-1]{B_{n-1}}\sqrt[n+1]{B_{n+1}}}/{\sqrt[n]{B_n}^2}$ as follows,
\[1 + \left(\frac{12}{n^3} - \frac{8}{n^3} \log{n}\right) + o\left(\frac{1}{n^3}\right).\]
Consequently, we will prove using Theorem \ref{t2} that $\widetilde{f_n}=1-\frac{1}{n^2}$ serves as a lower bound and $\widetilde{g_n}=1-\frac{8}{n^3}$ acts as an upper bound for $u_n$ when $n \ge 14$.
Let
\[D_{1}(n) = (n^2 - n)\log{f_{n}} - (n^2 + n - 2)\log{g_{n-1}} + 2\log{s_{n-1}} - (n - 1)n(n + 1)\log{\widetilde{f_{n}}}\]
and
\[D_{2}(n) = (n - 1)n(n + 1)\log{\widetilde{g_{n}}} - (n^2 - n)\log{g_{n}} + (n^2 + n - 2)\log{f_{n-1}} - 2\log{S_{n-1}}.\]
Hence, we need to prove that $D_{1}(n) > 0$ and $D_{2}(n) > 0$ when $n \ge 14$. We know that $D_{1}^{(4)}(n)$ is a rational function of $n$ and based on the largest real root of the numerator of $D_{1}^{(4)}(n)$, we get that $D_{1}^{(4)}(n) > 0$ for $n \ge 32$. Then we obtain the following formulae by {\tt Mathematica},
\[\lim_{n \to +\infty} D_{1}(n) = \infty, \quad \lim_{n \to +\infty} D_{1}^{'}(n) = 1, \quad \lim_{n \to +\infty} D_{1}^{''}(n) = \lim_{n \to +\infty} D_{1}^{'''}(n) = 0.\]
Thus, we deduce that
\[D_{1}^{'''}(n) < 0, \quad D_{1}^{''}(n) > 0, \quad D_{1}^{'}(n) > 0, \quad \forall\, n\ge{32}.\]
Since $D_{1}(32) > 0$, we conclude that $D_{1}(n) > 0$ for $n \ge 32$. By a similiar proof process, we can show that $D_{2}(n) > 0$ for $n \ge 44$. Consequently, we have $\widetilde{f_n} < u_n <\widetilde{g_n}$ for $n \ge 44$. By verifying the initial values, we derive that
\[\widetilde{f_n} < u_n < \widetilde{g_n}, \quad \forall\, n\ge{14}.\]
Then when $n \ge 14$, let
\begin{align*}
p_n = \frac{n}{n+1}\widetilde{f_n} = \frac{n}{n+1}\left(1 - \frac{1}{n^2}\right),\\
q_n = \frac{n}{n+1}\widetilde{g_n} = \frac{n}{n+1}\left(1 - \frac{8}{n^3}\right).
\end{align*}
By direct calculation, we find that
\[t(p_n,p_{n+1}) = \frac{4}{n{(1+n)}^2},\]
which is positive for $n \ge 1$.
\[t(p_n,q_{n+1}) = \frac{-49+240n+122n^2+100n^3+15n^4+4n^5}{n^2(1+n)^4(2+n)^2},\]
which is positive for $n \ge 1$.
\[t(q_n,p_{n+1}) = \frac{-32+48n+3n^2+4n^3}{n^2(1+n)^4},\]
which is positive for $n \ge 1$.
\begin{align*}
t(q_n,q_{n+1}) &= \frac{4}{{n^4(1+n)^6(2+n)^2}}(-784+672n+728n^2+840n^3+529n^4\\
&+ 325n^5+98n^6+34n^7+5n^8+n^9),
\end{align*}
which is positive for $n \ge 1$.\\
\indent Therefore, we can conclude that the sequence $\{\sqrt[n]{B_n}/n!\}_{n \ge 14}$ satisfies the higher order Tur{\'a}n inequality according to Theorem \ref{t3}. By examining the initial terms, we can further deduce that sequence $\{\sqrt[n]{B_n}/n!\}_{n \ge 2}$ satisfies the higher order Tur{\'a}n inequality.\\
\indent Next when $n \ge 14$, let
\begin{align*}
\widetilde{p_n} &= \frac{n}{n+1}\frac{\widetilde{f_{n+1}}}{\widetilde{g_n}} = \frac{{n}^5(2+{n})}{(1+{n})^3\left(-8+{n}^3\right)},\\
\widetilde{q_n} &= \frac{n}{n+1}\frac{\widetilde{g_{n+1}}}{\widetilde{f_n}} = \frac{{n^3(7+4n+n^2)}}{(1+n)^5}.
\end{align*}
So we can get that
\begin{align*}
& t(\widetilde{p_n},\widetilde{p_{n+1}}) \\
={} & \frac{4}{(-2 + n)^2(-1 + n)^2 (1 + n)^3 (2 + n)^4 (4 + 2n + n^2)^2 (7 + 4n + n^2)^2}(40320 \\
+{} & 178496n + 290816n^2 + 145312n^3 - 177824n^4- 322592n^5 - 192458n^6\\
-{} & 18235n^7 + 39578n^8 + 20340n^9 + 35n^{10} - 3304n^{11} - 1083n^{12} + 71n^{14}\\
+{} & 15n^{15} + n^{16}) > 0,\quad \forall\, n\ge{4}.
\end{align*}
\begin{align*}
& t(\widetilde{p_n},\widetilde{q_{n+1}}) \\
={} & \frac{4}{(-2 + n)^2 (1 + n)^3 (2 + n)^8 (4 + 2n + n^2)^2}(6144 + 36864n + 94720n^2 \\ 
+{} & 132864n^3 + 102144n^4 + 26944n^5 - 25056n^6 - 29952n^7 - 14232n^8 - 3036n^9\\ 
+{} & 168n^{10} + 283n^{11} + 81n^{12} + 12n^{13} + n^{14}) > 0,\quad \forall\, n\ge{4}.
\end{align*}
\begin{align*}
& t(\widetilde{q_n},\widetilde{p_{n+1}}) \\
={} & \frac{4}{(-1 + n)^2 (1 + n)^5 (2 + n)^6 (7 + 4n + n^2)}(360 + 2468n + 7086n^2 + 7107n^3\\
-{} & 1451n^4 - 8433n^5 - 6858n^6 - 2217n^7 + 38n^8 + 261n^9 + 88n^{10} + 14n^{11}\\
+{} & n^{12}) > 0,\quad \forall\, n\ge{3}.
\end{align*}
\begin{align*}
& t(\widetilde{q_n},\widetilde{q_{n+1}}) \\
={} & \frac{4}{(1 + n)^5 (2 + n)^{10}}(384 + 3456n + 14080n^2 + 31088n^3 + 41512n^4 + 36784n^5\\
+{} & 23084n^6 + 10531n^7 + 3536n^8 + 868n^9 + 151n^{10} + 17n^{11} + n^{12}) > 0,\\
\forall {} &\, n\ge{1}.
\end{align*}
Consequenctly, we know that the sequence $\{\sqrt[n+1]{B_{n+1}}/(\sqrt[n]{B_n} n!)\}_{n \ge 14}$ satisfies the higher Tur{\'a}n inequality by Theorem \ref{t4}. After verifying the first 13 items, we ultimately conclude that the sequence $\{\sqrt[n+1]{B_{n+1}}/(\sqrt[n]{B_n} n!)\}_{n \ge 2}$ satisfies the higher Tur{\'a}n inequality.
\end{proof}
By using the {\tt Mathematica} package {\tt P-rec.m} (which is available at \cite{pre}), we can show that the higher order Tur{\'a}n inequalities hold for the two sequences associated with the root sequence of many combinatorial sequences, such as Fine numbers $\{f_n\}_{n \ge 4}$, Motzkin numbers $\{M_n\}_{n \ge 2}$, Cohen numbers $\{C_{n}\}_{n \ge 2}$, large Schr{\"o}der numbers $\{S_n\}_{n \ge 2}$,  the numbers of the set of all tree-like polyhexes with $n + 1$ hexagons $\{h_n\}_{n \ge 2}$, the numbers of walks on cubic lattice with $n$ steps, starting and finishing on the $xy$ plane and never going below it $\{w_n\}_{n \ge 2}$, the numbers of $n \times n$ matrices with nonnegative entries and every row and column sum $2$ $\{t_n\}_{n \ge 2}$, Domb numbers $\{D_n\}_{n \ge 2}$ and so on. Then we list the lower and upper bounds of $u_n = \sqrt[n-1]{a_{n-1}}\sqrt[n+1]{a_{n+1}}/\sqrt[n]{a_n}^{2}$ for $n \ge N$.
\begin{table}[!h]
    \renewcommand{\arraystretch}{1.2}
    \caption{The lower and upper bounds}
    \label{table_example}
    \centering
    \begin{tabular}{|c|c|c|c|}
        \hline
        \diagbox{} & the\ lower\ bound & the\ upper\ bound & $N$ \\
        \hline
        $f_n$ & $1-\frac{1}{n^2}$ & $1-\frac{3}{n^3}$ & 6\\
        \hline
        $M_n$ & $1-\frac{1}{n^2}$ & $1-\frac{3}{n^3}$ & 11\\
        \hline
        $C_n$ & $1-\frac{1}{n^2}$ & $1-\frac{5}{n^3}$ & 9\\
        \hline
        $S_n$ & $1-\frac{1}{n^2}$ & $1-\frac{3}{n^3}$ & 9\\
        \hline
        $h_n$ & $1-\frac{1}{n^2}$ & $1-\frac{3}{n^3}$ & 6\\
        \hline
        $w_n$ & $1-\frac{1}{n^2}$ & $1-\frac{3}{n^3}$ & 26\\
        \hline
        $t_n$ & $1-\frac{2}{n^2}+\frac{1}{n^3}$ & $1-\frac{1}{n^2}$ & 5\\
        \hline
        $D_n$ & $1-\frac{1}{n^2}$ & $1-\frac{3}{n^3}$ & 6\\
        \hline
    \end{tabular}
\end{table}

\section{The Laguerre inequality of order 2}\label{s3}

In this section, we will give sufficient conditions for the sequences $\{\sq/n!\}_{n \ge 1}$ and $\{\sqrt[n+1]{a_{n+1}}/(\sq n!)\}_{n \ge 1}$  to satisfy the Laguerre inequality of order two, where the sequence $\{a_n\}_{n \ge 1}$ is a P-recursive sequence. Li \cite{li2022ell} presented a method to determine the explicit value of  $N$ such that Laguerre inequality of order two holds for $\{a_n\}_{n\ge N}$.

\begin{lemma}\rm{\bf{(\cite[Theorem 5.1]{li2022ell})}}\label{l31}
If there exists an integer $N$, two rational functions $f_n$ and $g_n$, such that for all $n \ge N$,
\[f_n < \frac{a_{n-1}a_{n+1}}{a_{n}^{2}} < g_n,\]
and
\[f_{n-1} f_{n}^{2} f_{n+1} - 4 g_{n} + 3 >0.\]
Then $\{a_n\}_{n \ge N}$ satisfies Laguerre inequality of order two.
\end{lemma}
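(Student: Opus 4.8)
The plan is to reduce the Laguerre inequality of order two to a multiplicative statement about the single ratio $b_n:=a_{n-1}a_{n+1}/a_n^{2}$ and then feed in the bounds $f_n<b_n<g_n$ termwise. As in the rest of the paper we work with a positive sequence, so dividing by any $a_n^{2}$ is harmless and every $b_n$ is positive.

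First I would make the Laguerre expression explicit. Specializing (\ref{14}) to $m=2$ and collecting the five terms of the binomial sum, the factor $\frac{1}{2}$ cancels (the coefficients $2,8,6$ are all even) and one is left with
\[L_{2}(a_n)=a_{n}a_{n+4}-4a_{n+1}a_{n+3}+3a_{n+2}^{2}.\]
After the shift $n\mapsto n-2$, the Laguerre inequality of order two is the statement $a_{n-2}a_{n+2}-4a_{n-1}a_{n+1}+3a_n^{2}\ge 0$. Dividing by $a_n^{2}>0$ and using the telescoping identity
\[\frac{a_{n-2}a_{n+2}}{a_n^{2}}=\frac{a_{n-2}a_n}{a_{n-1}^{2}}\cdot\frac{a_{n-1}^{2}a_{n+1}^{2}}{a_n^{4}}\cdot\frac{a_na_{n+2}}{a_{n+1}^{2}}=b_{n-1}\,b_n^{2}\,b_{n+1},\]
this becomes $b_{n-1}b_n^{2}b_{n+1}-4b_n+3\ge 0$, i.e.\ exactly the shape of the hypothesis with $b$ in place of $f$ and $g$.

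Second, I would apply the hypotheses. For every $n$ with $n-1\ge N$ we have $0<f_k<b_k<g_k$ for $k=n-1,n,n+1$; positivity of the lower bounds $f_k$ on the range under consideration is what makes the product comparison run in the right direction, and it is automatic in the intended applications because the $b_k$ tend to $1$ while the $f_k$ are rational functions with the same limit. Then $b_{n-1}b_n^{2}b_{n+1}>f_{n-1}f_n^{2}f_{n+1}$ and $-4b_n>-4g_n$, so
\[\frac{L_{2}(a_{n-2})}{a_n^{2}}=b_{n-1}b_n^{2}b_{n+1}-4b_n+3>f_{n-1}f_n^{2}f_{n+1}-4g_n+3>0\]
by the assumed inequality. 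Hence $L_{2}(a_{n-2})>0$ whenever $n-1\ge N$, i.e.\ $L_{2}(a_m)>0$ for all $m\ge N-1$, which in particular yields the claim.

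I do not expect any genuine obstacle here: the only points needing care are the routine index bookkeeping around the shift $n\mapsto n-2$ and the remark that the $f_k$ must be positive on $[N,\infty)$ for the multiplicative estimate $b_{n-1}b_n^{2}b_{n+1}>f_{n-1}f_n^{2}f_{n+1}$ to hold — something one may simply add to the hypotheses at no cost for P-recursive sequences whose root ratios converge.
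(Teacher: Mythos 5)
Your proof is correct, and it is the natural argument: the paper itself gives no proof of this lemma (it is quoted as Theorem 5.1 of the cited reference \cite{li2022ell}), but the reduction you use --- writing $L_2(a_{n-2})/a_n^2$ as $b_{n-1}b_n^2b_{n+1}-4b_n+3$ with $b_n=a_{n-1}a_{n+1}/a_n^2$ via the telescoping identity, then inserting the bounds termwise --- is exactly what the shape of the hypothesis is designed for, and your index bookkeeping ($L_2(a_m)>0$ for $m\ge N-1$, hence for $m\ge N$) is right. Your observation that the multiplicative step $b_{n-1}b_n^2b_{n+1}>f_{n-1}f_n^2f_{n+1}$ tacitly needs $f_k>0$ is a genuine (if harmless in practice) imprecision in the lemma as stated, and flagging it is appropriate; in all applications in this paper the lower bounds are of the form $1+O(1/n^2)$, so the condition is automatic.
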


We will utilize the lemma mentioned above to provide sufficient conditions for the sequences $\{\sq/n!\}_{n \ge 1}$ and $\{\sqrt[n+1]{a_{n+1}}/(\sq n!)\}_{n \ge 1}$ to satisfy the Laguerre inequality of order two. Similarly to the previous section, let $\widetilde{f_n}$ and $\widetilde{g_n}$ represent the lower and upper bounds of $u_n = {\sqrt[n-1]{a_{n-1}}\sqrt[n+1]{a_{n+1}}}/{\sq^2}$, respectively.

\begin{theorem}\label{t32}
Let $\{a_n\}_{n \ge 1}$ be a positive sequence and
\[p_n = \frac{n}{n+1} \widetilde{f_n},\ q_n = \frac{n}{n+1} \widetilde{g_n}.\]
If there exists an integer $N$ such that for $n \ge N$,
\[p_{n-1} p_{n}^2 p_{n+1} - 4 q_{n} + 3 > 0,\]
then the sequence $\{\sq/n!\}_{n \ge N}$ satisfies the Laguerre inequality of order two.
\end{theorem}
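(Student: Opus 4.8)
The plan is to deduce the statement directly from Lemma \ref{l31}, exactly as Theorem \ref{t3} is deduced from Lemma \ref{l1}. Write $c_n := \sq/n!$ for the terms of the target sequence. The one computation that matters is the elementary identity relating the ``consecutive ratio'' of $\{c_n\}$ to $u_n$: since $c_n = \sq/n!$,
\[
\frac{c_{n-1}c_{n+1}}{c_n^{2}}
= \frac{\sqrt[n-1]{a_{n-1}}\,\sqrt[n+1]{a_{n+1}}}{\sq^{2}}\cdot\frac{(n!)^{2}}{(n-1)!\,(n+1)!}
= \frac{n}{n+1}\,u_n .
\]
Thus the ratio $c_{n-1}c_{n+1}/c_n^{2}$ is just $u_n$ scaled by the rational factor $n/(n+1)$, which is precisely the factor appearing in the definitions $p_n = \tfrac{n}{n+1}\widetilde{f_n}$ and $q_n = \tfrac{n}{n+1}\widetilde{g_n}$.

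Next I would assemble the hypotheses of Lemma \ref{l31} for the sequence $\{c_n\}_{n\ge N}$. By assumption (obtained via Theorem \ref{t2}) we have $\widetilde{f_n} < u_n < \widetilde{g_n}$ for $n\ge N$; multiplying through by $n/(n+1)>0$ and using the displayed identity gives
\[
p_n < \frac{c_{n-1}c_{n+1}}{c_n^{2}} < q_n , \qquad n\ge N,
\]
so $p_n$ and $q_n$ serve as the rational lower and upper bounds required by Lemma \ref{l31}. With $f_n := p_n$ and $g_n := q_n$ in that lemma, the remaining condition $f_{n-1}f_n^{2}f_{n+1}-4g_n+3>0$ for $n\ge N$ is literally the assumption $p_{n-1}p_n^{2}p_{n+1}-4q_n+3>0$ of the theorem. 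Hence Lemma \ref{l31} applies to $\{c_n\}_{n\ge N}=\{\sq/n!\}_{n\ge N}$, which satisfies the Laguerre inequality of order two, as claimed.

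There is essentially no genuine obstacle in the proof itself: the statement is a clean specialization of Lemma \ref{l31}, and the only points needing care are bookkeeping ones — getting the factor $n/(n+1)$ correct in $c_{n-1}c_{n+1}/c_n^{2}$, confirming that $p_n,q_n$ are rational functions (which follows since $\widetilde{f_n},\widetilde{g_n}$ are), and making sure the bound $\widetilde{f_n}<u_n<\widetilde{g_n}$ is in force on the entire range of indices (including the shifted indices $n-1$ and $n+1$) needed to invoke Lemma \ref{l31}. The substantive work lies elsewhere: producing the bounds $\widetilde{f_n},\widetilde{g_n}$ for $u_n$ through Theorem \ref{t2}, and, in any concrete application, verifying the polynomial positivity $p_{n-1}p_n^{2}p_{n+1}-4q_n+3>0$ for $n\ge N$ together with checking the finitely many initial terms below $N$. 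Because the argument is so short, one may, as with Theorems \ref{t3} and \ref{t4}, simply record it as an immediate consequence of Lemma \ref{l31} and omit the details.
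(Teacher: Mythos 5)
Your proposal is correct and follows exactly the route the paper intends: the paper states that Theorem \ref{t32} follows directly from Lemma \ref{l31} and omits the details, and your argument supplies precisely those details — the identity $c_{n-1}c_{n+1}/c_n^2=\tfrac{n}{n+1}u_n$ for $c_n=\sq/n!$ and the substitution $f_n:=p_n$, $g_n:=q_n$ into the lemma. Nothing further is needed.
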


\begin{theorem}\label{t33}
Let $\{a_n\}_{n \ge 1}$ be a positive sequence and
\[\widetilde{p_n} = \frac{n}{n+1} \frac{\widetilde{f_{n+1}}}{\widetilde{g_{n}}},\ \widetilde{q_n} = \frac{n}{n+1} \frac{\widetilde{g_{n+1}}}{\widetilde{f_n}}.\]
If there exists an integer $N$ such that for $n \ge N$,
\[\widetilde{p_{n-1}}\widetilde{p_{n}}\widetilde{q_{n}}\widetilde{p_{n+1}} - 4\widetilde{q_{n}} + 3 > 0,\]
then the sequence $\{\sqrt[n+1]{a_{n+1}}/(\sq n!)\}_{n \ge N}$ satisfies the Laguerre inequality of order two.
\end{theorem}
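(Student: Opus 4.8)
The plan is to apply Lemma~\ref{l31} to the sequence $b_n:=\sqrt[n+1]{a_{n+1}}/(\sq\,n!)$, so the entire problem reduces to two steps: trap the ratio $V_n:=b_{n-1}b_{n+1}/b_n^{2}$ strictly between the prescribed rational functions $\widetilde{p_n}$ and $\widetilde{q_n}$, and then read off the one remaining hypothesis of Lemma~\ref{l31}. This parallels the corresponding higher-order Tur\'an statement, Theorem~\ref{t4}, and the only genuinely new point is the precise form of $V_n$.

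First I would carry out the algebraic reduction. Setting $c_k:=\sqrt[k]{a_k}$, so that $b_n=c_{n+1}/(c_n\,n!)$, a direct computation gives
\[
V_n=\frac{(n!)^{2}}{(n-1)!\,(n+1)!}\cdot\frac{c_n^{3}c_{n+2}}{c_{n-1}c_{n+1}^{3}}
=\frac{n}{n+1}\cdot\frac{c_n^{3}c_{n+2}}{c_{n-1}c_{n+1}^{3}}
=\frac{n}{n+1}\cdot\frac{u_{n+1}}{u_n},
\]
where the last equality is the telescoping identity $c_n^{3}c_{n+2}/(c_{n-1}c_{n+1}^{3})=u_{n+1}/u_n$ with $u_n=c_{n-1}c_{n+1}/c_n^{2}$. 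This $u_n$ is exactly the quantity for which Section~\ref{s2} (Theorem~\ref{t2}) supplies rational bounds $\widetilde{f_n}<u_n<\widetilde{g_n}$ for $n\ge N$; for such $n$ these bounds may be taken positive, and they tend to $1$.

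Next, on the range where $\widetilde{f_m},\widetilde{g_m}>0$, the monotonicity of $(x,y)\mapsto y/x$ on the positive quadrant upgrades $\widetilde{f_m}<u_m<\widetilde{g_m}$ to $\widetilde{f_{n+1}}/\widetilde{g_n}<u_{n+1}/u_n<\widetilde{g_{n+1}}/\widetilde{f_n}$, and multiplying through by $n/(n+1)$ yields exactly $\widetilde{p_n}<V_n<\widetilde{q_n}$. I would then invoke Lemma~\ref{l31} for the sequence $\{b_n\}$ with $\widetilde{p_n}$ in the role of the lower bound and $\widetilde{q_n}$ in the role of the upper bound; its remaining hypothesis is precisely the rational inequality assumed in the statement, so Lemma~\ref{l31} gives that $\{b_n\}_{n\ge N}$ satisfies the Laguerre inequality of order two, as claimed.

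The real difficulty is not any isolated step but the fact that the target inequality is asymptotically degenerate: since $\widetilde{p_n},\widetilde{q_n}\to1$, the left-hand side of the assumed inequality tends to $1-4+3=0$, so the bounds $\widetilde{f_n},\widetilde{g_n}$ on $u_n$ must be chosen sharp enough (and the threshold $N$ large enough) that the inequality stays strictly positive for \emph{every} $n\ge N$, not merely eventually. Concretely this means three pieces of bookkeeping: taking $\widetilde{f_n},\widetilde{g_n}$ tight and positive on a window slightly larger than $[N,\infty)$ (the Lemma~\ref{l31} inequality at index $n$ involves $\widetilde{p_{n-1}}$ and $\widetilde{p_{n+1}}$); verifying the resulting rational inequality by clearing denominators and bounding the largest real root of the numerator polynomial with \texttt{Mathematica}; and checking the finitely many initial indices directly, exactly as in the worked example of Section~\ref{s2}. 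Once these are in place the argument closes with no further input.
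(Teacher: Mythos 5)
Your reduction is the one the paper intends (the paper omits the proof entirely, asserting that the theorem follows ``directly'' from Lemma~\ref{l31}): the identity $V_n:=b_{n-1}b_{n+1}/b_n^2=\frac{n}{n+1}\,u_{n+1}/u_n$ is correct, and together with $\widetilde{f_m}<u_m<\widetilde{g_m}$ (all positive, on the window of indices you correctly identify) it does give $\widetilde{p_n}<V_n<\widetilde{q_n}$. The gap is in your final step, where you claim that the remaining hypothesis of Lemma~\ref{l31} ``is precisely the rational inequality assumed in the statement.'' It is not. Instantiating Lemma~\ref{l31} for $\{b_n\}$ with lower bound $\widetilde{p_n}$ and upper bound $\widetilde{q_n}$ yields the condition $\widetilde{p_{n-1}}\,\widetilde{p_n}^{\,2}\,\widetilde{p_{n+1}}-4\widetilde{q_n}+3>0$, whereas the theorem assumes $\widetilde{p_{n-1}}\,\widetilde{p_n}\,\widetilde{q_n}\,\widetilde{p_{n+1}}-4\widetilde{q_n}+3>0$. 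Since $\widetilde{p_n}<\widetilde{q_n}$, the assumed inequality is strictly \emph{weaker} than the one your argument needs, so the implication runs the wrong way: you have proved the theorem only under a stronger hypothesis than the stated one (and the stated one is what the paper's example actually verifies, so it cannot be dismissed as a typo on your side of the ledger).

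The statement as given is nevertheless true, but it needs one more idea than quoting Lemma~\ref{l31}. Write $\Phi_n=V_{n-1}V_n^2V_{n+1}-4V_n+3=V_n\left(V_{n-1}V_nV_{n+1}-4\right)+3$ and split into cases. If $V_{n-1}V_nV_{n+1}\ge 4$, then $\Phi_n\ge 3>0$ trivially. If $V_{n-1}V_nV_{n+1}<4$, then since $0<V_n<\widetilde{q_n}$ and the bracket is negative, $\Phi_n>\widetilde{q_n}\left(V_{n-1}V_nV_{n+1}-4\right)+3\ge \widetilde{q_n}\,\widetilde{p_{n-1}}\,\widetilde{p_n}\,\widetilde{p_{n+1}}-4\widetilde{q_n}+3>0$ by the stated hypothesis (using $V_m>\widetilde{p_m}>0$). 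This is a sharpened form of the \emph{proof} of Lemma~\ref{l31} rather than an application of it; either supply it, or state explicitly that you are proving the version with $\widetilde{p_n}^{\,2}$ in place of $\widetilde{p_n}\widetilde{q_n}$. The rest of your write-up --- the telescoping identity, the positivity bookkeeping, and the remark that the inequality is asymptotically degenerate since $\widetilde{p_n},\widetilde{q_n}\to 1$ --- is correct and matches the paper's (implicit) route.
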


By applying Lemma \ref{l31}, we can directly obtain the two theorems above, thus we omit the proof process. Subsequently, we will illustrate the application of the theorems with an example.

\begin{theorem}
Let $H_n$ denote the number of $n \times n$ $(0,1)$-matrices with row and column sum 2. The sequences $\{\sqrt[n]{H_{n}}/n!\}_{n \ge 1}$ and $\{\sqrt[n+1]{H_{n+1}}/(\sqrt[n]{H_n} n!)\}_{n \ge 2}$ satisfy the Laguerre inequalities of order two.
\end{theorem}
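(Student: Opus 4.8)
The plan is to run the same pipeline that was applied above to the Baxter numbers, with Theorems~\ref{t32} and \ref{t33} replacing Theorems~\ref{t3} and \ref{t4}. First I would apply Zeilberger's algorithm to $H_n$; since $H_n$ counts $2$-regular bipartite graphs it is holonomic of order two, and one obtains a recurrence of the shape $2H_n = 2n(n-1)H_{n-1} + n(n-1)^2 H_{n-2}$ with $H_0=1$, $H_1=0$, $H_2=1$, $H_3=6$, $H_4=90$. Because the trailing coefficient has strictly larger degree than the leading one, the algorithm \texttt{Asy} from \texttt{P-rec.m} returns an asymptotic expansion of $H_n$ of factorial growth, of the form $C\,(n!)^2 n^{\alpha}\bigl(1 + c_1/n + c_2/n^2 + o(1/n^2)\bigr)$; from this one reads off that $r_n = H_{n+1}/H_n$ behaves like $n^2(1 + O(1/n))$, and \texttt{RootLog} supplies explicit rational $f_n,g_n$ with $f_n < r_n < g_n$ for $n$ beyond some threshold.

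Next I would pin down crude two-sided bounds $s_n < H_n < S_n$ valid for all $n$ above a small constant. Guessing $s_n,S_n$ of the form $(n!)^2 n^{\beta}$, I would prove the inductive step $s_{n+1} < H_{n+1} = r_n H_n < S_{n+1}$ using $f_n s_n \le r_n H_n \le g_n S_n$ and checking that $(f_n s_n - s_{n+1})/s_n$ and $(S_{n+1} - g_n S_n)/S_n$ are eventually positive rational functions, then clearing the finitely many remaining small indices by direct computation (note $H_1=0$, so this induction is run from $n\ge 2$ or $3$). Feeding $s_n,S_n,f_n,g_n$ into Theorem~\ref{t2} with candidate bounds $\widetilde{f_n},\widetilde{g_n}$ read off from the asymptotic expansion of $u_n = \sqrt[n-1]{H_{n-1}}\sqrt[n+1]{H_{n+1}}/\sqrt[n]{H_n}^2$ (of the shape $1 - \mu_0/n^2 + \cdots$), I would verify the two logarithmic inequalities of Theorem~\ref{t2}: writing $D_1(n)$ and $D_2(n)$ for the two differences, differentiate each repeatedly until the result is a rational function whose sign is decided by isolating the largest real root of its numerator, then integrate back up using $\lim_{n\to\infty} D_i^{(k)}(n)$ together with one checked base value, exactly as done for $B_n$. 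This gives $\widetilde{f_n} < u_n < \widetilde{g_n}$ for $n\ge N$, and a finite check pushes it down.

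With the bounds on $u_n$ in hand the rest is mechanical. For the first sequence I would set $p_n = \tfrac{n}{n+1}\widetilde{f_n}$, $q_n = \tfrac{n}{n+1}\widetilde{g_n}$, verify the single rational inequality $p_{n-1}p_n^2 p_{n+1} - 4q_n + 3 > 0$ for $n\ge N$ by clearing denominators and root-isolating, invoke Theorem~\ref{t32}, and then check the handful of initial terms (including the degenerate $a_1 = \sqrt[1]{H_1}/1! = 0$ term, where the order-two Laguerre inequality reduces to $3a_3^2 \ge 4a_2 a_4$) to obtain the claim for $n\ge 1$. For the second sequence I would set $\widetilde{p_n} = \tfrac{n}{n+1}\widetilde{f_{n+1}}/\widetilde{g_n}$ and $\widetilde{q_n} = \tfrac{n}{n+1}\widetilde{g_{n+1}}/\widetilde{f_n}$, verify $\widetilde{p_{n-1}}\widetilde{p_{n}}\widetilde{q_{n}}\widetilde{p_{n+1}} - 4\widetilde{q_{n}} + 3 > 0$ for $n\ge N$, invoke Theorem~\ref{t33}, and check the initial terms down to $n\ge 2$.

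The main obstacle is the middle step: choosing $\widetilde{f_n}$ and $\widetilde{g_n}$ that are simultaneously tight enough that the final inequalities $p_{n-1}p_n^2 p_{n+1} - 4q_n + 3 > 0$ and $\widetilde{p_{n-1}}\widetilde{p_{n}}\widetilde{q_{n}}\widetilde{p_{n+1}} - 4\widetilde{q_{n}} + 3 > 0$ survive near the threshold, yet loose enough that the transcendental comparisons $D_1(n) > 0$, $D_2(n) > 0$ can be settled by finitely many differentiations. If the first natural choice of $\widetilde{g_n}$ makes a final inequality fail, one must sharpen it by carrying an extra term of the asymptotic expansion of $u_n$, which enlarges the degrees of the rational functions $D_i^{(k)}$ and of the polynomials to be root-isolated; balancing these competing demands, and clearing the possibly large threshold $N$ forced by the crude bounds on $H_n$ via a brute-force check of the smaller indices, is where essentially all the work lies.
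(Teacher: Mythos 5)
Your proposal follows essentially the same pipeline as the paper's proof: Zeilberger's recurrence and the asymptotics of $H_n$, bounds $f_n,g_n$ on $r_n$ and inductive bounds $s_n,S_n$ on $H_n$, then Theorem~\ref{t2} to trap $u_n$ between $\widetilde{f_n}=1-\tfrac{2}{n^2}+\tfrac{1}{n^3}$ and $\widetilde{g_n}=1-\tfrac{1}{n^2}$, and finally the single rational inequalities of Theorems~\ref{t32} and~\ref{t33} plus a finite check of small indices (including the degenerate $H_1=0$ term, which you correctly flag). The only minor deviation is that the paper takes $s_n=e^{-2n}n^{1/4+2n}$ and $S_n=e^{-2n}n^{1+2n}$, so its induction step is settled by the transcendental $\Delta_i$ log-derivative argument rather than by a rational-function sign check as in your $(n!)^2n^{\beta}$ variant; both are sound.
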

\begin{proof}
According to Zeilberger's algorithm, we have the following recursion relation for $H_n$,
\[2 H_{n} - 2n(n-1) H_{n-1} - n(n-1)^2 H_{n-2} = 0,\]
with initial values 
\[H_{0}=1,\ H_{1}=0,\ H_{2}=1,\ H_{3}=6.\]
Using the {\tt Mathematica} package {\tt P-rec.m}, we obtain the asymptotic expression of $H_n$, 
\[C\cdot {e}^{-2n}{n}^{\frac12+2n}\left(1-\frac5{24n}-\frac{47}{1152n^2}+o\left(\frac{1}{n^2}\right)\right),\]
where $C$ is a constant.\\
\indent Additionally, for $n \ge 5$, we can establish a lower bound $f_n$ and an upper bound $g_n$ for $r_n = {H_{n+1}}/{H_n}$ as follows,
\[f_n = n^2 + \frac{3n}{2} + \frac{3}{4} + \frac{1}{4n} -\frac{19}{16n^2},\ g_n = n^2 + \frac{3n}{2} + \frac{3}{4} + \frac{1}{4n} +\frac{13}{16n^2}.\]
Then we aim to prove that $s_n = e^{-2n} n^{\frac14+2n}$ serves as a lower bound and $S_n = e^{-2n} n^{1+2n}$ serves as an upper bound of $H_n$ when $n \ge 5$ by induction. Suppose that $s_n < H_n < S_n$, we need to prove $s_{n+1} < H_{n+1} < S_{n+1}$. Since that 
\[f_n s_n < H_{n+1} = r_n H_n < g_n S_n .\]
Thus, it is sufficient to prove
\[s_{n+1} < f_n s_n,\ g_n S_n < S_{n+1}.\]
For $n \ge 5$, we define
\[\Delta_{1}(n) = \log{f_n} + \log{s_n} - \log{s_{n+1}},\ \Delta_{2}(n) = \log{S_{n+1}} - \log{g_n} - \log{S_{n}}.\]
Consequently, we know that $\Delta_{1}^{''}(n)$ and $\Delta_{2}^{''}(n)$ are rational functions about $n$. By analyzing the largest roots of their numerators, we conclude that $\Delta_{1}^{''}(n) > 0$ and $\Delta_{2}^{''}(n) > 0$ for $n \ge 3$. By calculation, we also establish that
\[\lim_{n \to +\infty} \Delta_{1}^{'}(n) = \lim_{n \to +\infty} \Delta_{2}^{'}(n) = 0.\] 
Given that $\Delta_{1}(3) > 0$ and $\Delta_{2}(3) > 0$, we derive that $\Delta_{1}(n) > 0$ and $\Delta_{2}(n) > 0$ when $n \ge 5$. Then examining the initial values, we observe that for $n \ge 5$,
\[s_n < H_n < S_n.\]
From the asymptotic expansion of $H_n$, we can derive the asymptotic expansion of $u_n = \sqrt[n-1]{H_{n-1}} \sqrt[n+1]{H_{n+1}}/\sqrt[n]{H_n}^2$,
\[1 - \frac{2}{n^2} + \left(-\frac{3}{2n^3} + \frac{\log{n}}{n^3}\right) + o\left(\frac{1}{n^3}\right).\]
Following a similar proof process as above, we can establish that $\widetilde{f_n} = 1-\frac{2}{n^2}+\frac{1}{n^3}$ is a lower bound and $\widetilde{g_n} = 1-\frac{1}{n^2}$ is an upper bound for $u_n$ when $n \ge 5$.\\
\indent Therefore, when $n \ge 5$, let
\begin{align*}
p_n &= \frac{n}{n+1} \widetilde{f_n} = \frac{n}{n+1} \left(1 - \frac{2}{n^2} + \frac{1}{n^3}\right),\\
q_n &= \frac{n}{n+1} \widetilde{g_n} = \frac{n}{n+1} \left(1 - \frac{1}{n^2}\right).
\end{align*}
Through calculation, we can obtain that
\begin{align*}
& p_{n-1} p_{n}^2 p_{n+1} - 4 q_n + 3\\
={} & \frac{2+3n-14n^2-6n^3+62n^4+74n^5+26n^6+2n^7}{{n^4(1+n)^4(2+n)}} > 0,\quad \forall\, n\ge{1}.
\end{align*}
Thus, based on the Theorem \ref{t32}, it can be concluded that the sequence $\{\sqrt[n]{H_n}/n!\}_{n \ge 5}$ satisfies the Laguerre inequality of order two. After checking the first 4 items, we finally deduce that the sequence $\{\sqrt[n]{H_n}/n!\}_{n \ge 1}$ satisfies the Laguerre inequality of order two.\\
\indent Next, when $n \ge 5$, we define
\begin{align*}
\widetilde{p_n} &= \frac{n}{n+1} \frac{\widetilde{f_{n+1}}}{\widetilde{g_{n}}} = \frac{n^4(1+3n+n^2)}{(-1+n)(1+n)^5},\\
\widetilde{q_n} &= \frac{n}{n+1} \frac{\widetilde{g_{n+1}}}{\widetilde{f_{n}}} = \frac{{n}^5(2+{n})}{(1+{n})^3(1-2{n}+{n}^3)}.
\end{align*}
Consequently, we can get that
\begin{align*}
& \widetilde{p_{n-1}} \widetilde{p_{n}} \widetilde{q_{n}} \widetilde{p_{n+1}} - 4 \widetilde{q_n} + 3\\
={} & \frac{1}{(-2 + n) (-1 + n )(1 + n)^4 (2 + n)^4 (-1 + n +n^2)}(-96 - 336n -144n^2\\
+{} & 893n^3 + 1458n^4 + 597n^5 - 227n^6 - 192n^7 - 15n^8 + 6n^9),
\end{align*}
which is positive for $n \ge 8$. According to Theorem \ref{t33}, we know that the sequence $\{\sqrt[n+1]{H_{n+1}}/(\sqrt[n]{H_n} n!)\}_{n \ge 8}$ satisfies the Laguerre inequality of order two. By verifying the initial values, we can further conclude that the sequence $\{\sqrt[n+1]{H_{n+1}}/(\sqrt[n]{H_n} n!)\}_{n \ge 2}$ satisfies the Laguerre inequality of order two.
\end{proof}
The method described above allows us to prove that two sequences associated with the root sequence of Fine numbers $\{f_n\}_{n \ge 3}$ and two sequences related to the root sequence of other sequences in Section \ref{s2} satisfy the Laguerre inequalities of order two for $n \ge 1$.

\end{document}